 \newtheorem{theorem}{Theorem}[section]
 \newtheorem{definition}{Definition}[section]
 \numberwithin{equation}{section} 
\begin{document}

\begin{center}

\begin{title}
\title{\bf\Large{{Uniqueness and Stability of Solutions for a Coupled System of Nabla Fractional Difference Boundary Value Problems}}}
\end{title}

\vskip 0.25 in

\begin{author}
\author {Jagan Mohan Jonnalagadda\footnote[1]{Department of Mathematics, Birla Institute of Technology and Science Pilani, Hyderabad - 500078, Telangana, India. email: {j.jaganmohan@hotmail.com}}}
\end{author}

\end{center}

\vskip 0.25 in

\noindent{\bf Abstract:} In this article, we obtain sufficient conditions on existence, uniqueness and Ulam--Hyers stability of solutions for a coupled system of two-point nabla fractional difference boundary value problems, using Banach fixed point theorem and Urs's approach. Finally, we illustrate the applicability of established results through an example..
\vskip 0.2 in

\noindent{\bf Key Words:} Nabla fractional Riemann--Liouville difference, boundary value problem, existence, uniqueness, Ulam--Hyers stability

\vskip 0.2 in

\noindent{\bf AMS Classification:} 39A12, 39A70.

\vskip 0.2 in

\section{Introduction}

In this article, we consider the following coupled system of nabla fractional difference equations with conjugate boundary conditions
\begin{equation} \label{Coupled System}
\begin{cases}
\big{(}\nabla^{\alpha_{1}}_{\rho(a)}u_{1}\big{)}(t) + f_{1}(t, u_{1}(t), u_{2}(t)) = 0, \quad t \in \mathbb{N}^{b}_{a + 2}, \\ \big{(}\nabla^{\alpha_{2}}_{\rho(a)}u_{2}\big{)}(t) + f_{2}(t, u_{1}(t), u_{2}(t)) = 0, \quad t \in \mathbb{N}^{b}_{a + 2}, \\ u_{1}(a) = 0, ~ u_{1}(b) = 0, \\
u_{2}(a) = 0, ~ u_{2}(b) = 0,
\end{cases}
\end{equation}
where $a$, $b \in \mathbb{R}$ with $b - a \in \mathbb{N}_{2}$; $1 < \alpha_{1}, \alpha_{2} < 2$; $f_{1}$, $f_{2} : \mathbb{N}^{b}_{a + 1} \times \mathbb{R}^{2} \rightarrow \mathbb{R}$ and $\nabla^{\nu}_{\rho(a)}$ denotes the $\nu^{\text{th}}$-th order Riemann--Liouville type backward (nabla) difference operator where $\nu \in \{\alpha_{1}, \alpha_{2}\}$.

In 1940, Ulam \cite{Ul} posed a problem on the stability of functional equations and Hyers \cite{Hy} solved it in the next year for additive functions defined on Banach spaces. In 1978, Rassias \cite{Ra} provided a generalization of the Hyers theorem for linear mappings. Since then, several mathematicians investigated Ulam’s problem in different directions for various classes of functional equations \cite{Ju F, Ka}, differential equations \cite{Ju 1, Ju 2, Ju 3, Mi 1, Mi 2, Ru 1, Ru 2}, difference equations \cite{Ju 4, Ju 5, Ju 6, On}, fractional differential equations \cite{Al, Be, Du 1, Du 2, Kh 1, Kh 2, Ib, Wa}, and fractional difference equations \cite{Ch 1, Ja, Ch 2}.

In particular, Urs \cite{Urs} presented some Ulam--Hyers stability results for the coupled fixed point of a pair of contractive type operators on complete metric spaces. Motivated by this work, in this article, we study the Ulam--Hyers stability of \eqref{Coupled System}.

The present paper is organized as follows: Section 2 contains preliminaries on nabla fractional calculus. In sections 3 and 4, we establish sufficient conditions on uniqueness and Ulam--Hyers stability of solutions of the discrete fractional boundary value problem \eqref{Coupled System}, respectively. We present an example in section 4.

\section{Preliminaries}

\subsection{Nabla Fractional Calculus}

We use the following notations, definitions and known results of nabla fractional calculus throughout the article. Denote by $\mathbb{N}_{a} = \{a, a + 1, a + 2, \ldots\}$ and $\mathbb{N}^{b}_{a} = \{a, a + 1, a + 2, \ldots, b\}$ for any $a$, $b \in \mathbb{R}$ such that $b - a \in \mathbb{N}_{1}$.

\begin{definition}[See \cite{Bo}]
The backward jump operator $\rho : \mathbb{N}_{a} \rightarrow \mathbb{N}_{a}$ is defined by $$\rho(t) = \begin{cases} a, \hspace{0.43 in} t = a, \\ t - 1, \quad t \in \mathbb{N}_{a + 1}.\end{cases}$$
\end{definition}

\begin{definition}[See \cite{Ki, Po}] The Euler gamma function is defined by $$\Gamma (z) = \int_0^\infty t^{z - 1} e^{-t} dt, \quad \Re(z) > 0.$$ Using its reduction formula, the Euler gamma function can also be extended to the half-plane $\Re(z) \leq 0$ except for $z \in \{\ldots, -2, -1, 0\}$. 
\end{definition}

\begin{definition}[See \cite{Go}]
For $t \in \mathbb{R} \setminus \{\ldots, -2, -1, 0\}$ and $r \in \mathbb{R}$ such that $(t + r) \in \mathbb{R} \setminus \{\ldots, -2, -1, 0\}$, the generalized rising function is defined by
\begin{equation}
\nonumber t^{\overline{r}} = \frac{\Gamma(t + r)}{\Gamma(t)}.
\end{equation}
Also, if $t \in \{\ldots, -2, -1, 0\}$ and $r \in \mathbb{R}$ such that $(t + r) \in \mathbb{R} \setminus \{\ldots, -2, -1, 0\}$, then we use the convention that $t^{\overline{r}} = 0$.
\end{definition}

\begin{definition}[See \cite{Go}]
Let $\mu \in \mathbb{R} \setminus \{\ldots, -2, -1\}$. Define the $\mu^{th}$-order nabla fractional Taylor monomial by $$H_{\mu}(t, a) = \frac{(t - a)^{\overline{\mu}}}{\Gamma(\mu + 1)},$$ provided the right-hand side exists. Observe that $H_{\mu}(a, a) = 0$ and $H_{\mu}(t, a) = 0$ for all $\mu \in \{\ldots, -2, -1\}$ and $t \in \mathbb{N}_{a}$.
\end{definition}

\begin{definition}[See \cite{Bo}]
Let $u: \mathbb{N}_{a} \rightarrow \mathbb{R}$ and $N \in \mathbb{N}_1$. The first order backward (nabla) difference of $u$ is defined by $$\big{(}\nabla u\big{)}(t) = u(t) - u(t - 1), \quad t \in \mathbb{N}_{a + 1},$$ and the $N^{th}$-order nabla difference of $u$ is defined recursively by $$\big{(}{\nabla}^{N}u\big{)}(t) = \Big{(}\nabla\big{(}\nabla^{N - 1}u\big{)}\Big{)}(t), \quad t\in \mathbb{N}_{a + N}.$$
\end{definition}

\begin{definition}[See \cite{Go}]
Let $u: \mathbb{N}_{a + 1} \rightarrow \mathbb{R}$ and $N \in \mathbb{N}_1$. The $N^{\text{th}}$-order nabla sum of $u$ based at $a$ is given by
\begin{equation}
\nonumber \big{(}\nabla ^{-N}_{a}u\big{)}(t) = \sum^{t}_{s = a + 1}H_{N - 1}(t, \rho(s))u(s), \quad t \in \mathbb{N}_{a},
\end{equation}
where by convention $\big{(}\nabla ^{-N}_{a}u\big{)}(a) = 0$. We define $\big{(}\nabla ^{-0}_{a}u\big{)}(t) = u(t)$ for all $t \in \mathbb{N}_{a + 1}$.
\end{definition}

\begin{definition}[See \cite{Go}]
Let $u: \mathbb{N}_{a + 1} \rightarrow \mathbb{R}$ and $\nu > 0$. The $\nu^{\text{th}}$-order nabla sum of $u$ based at $a$ is given by
\begin{equation}
\nonumber \big{(}\nabla ^{-\nu}_{a}u\big{)}(t) = \sum^{t}_{s = a + 1}H_{\nu - 1}(t, \rho(s))u(s), \quad t \in \mathbb{N}_{a},
\end{equation}
where by convention $\big{(}\nabla ^{-\nu}_{a}u\big{)}(a) = 0$.
\end{definition}

\begin{definition}[See \cite{Go}]
Let $u: \mathbb{N}_{a + 1} \rightarrow \mathbb{R}$, $\nu > 0$ and choose $N \in \mathbb{N}_1$ such that $N - 1 < \nu \leq N$. The $\nu^{\text{th}}$-order nabla difference of $u$ is given by
\begin{equation}
\nonumber \big{(}\nabla ^{\nu}_{a}u\big{)}(t) = \Big{(}\nabla^N\big{(}\nabla_{a}^{-(N - \nu)}u\big{)}\Big{)}(t), \quad t\in\mathbb{N}_{a + N}.
\end{equation}
\end{definition}

\subsection{Boundary Value Problem}
Let $a$, $b \in \mathbb{R}$ with $b - a \in \mathbb{N}_{2}$. Assume $1 < \alpha < 2$ and $h : \mathbb{N}^{b}_{a + 1} \rightarrow \mathbb{R}$. Consider the boundary value problem
\begin{equation} \label{BVP 1}
\begin{cases}
\big{(}\nabla^{\alpha}_{\rho(a)}u\big{)}(t) + h(t) = 0, \quad t \in \mathbb{N}^{b}_{a + 2}, \\ u(a) = 0, ~ u(b) = 0.
\end{cases}
\end{equation}

Brackins \cite{Br}, Gholami et al. \cite{Gh} and the author \cite{Jo} have obtained the following expression for the unique solution of \eqref{BVP 1}, independently.

\begin{theorem} \cite{Br, Gh, Jo}
The nabla fractional boundary value problem \eqref{BVP 1} has the unique solution 
\begin{equation} \label{Sol}
u(t) = \sum^{b}_{s = a + 1}G(t, s)h(s), \quad t \in \mathbb{N}^{b}_{a},
\end{equation}
where
\begin{equation} \label{Jo Green}
G(t, s) = \frac{1}{\Gamma(\alpha)}\begin{cases}
\frac{(b - s + 1)^{\overline{\alpha - 1}}}{(b - a)^{\overline{\alpha - 1}}}(t - a)^{\overline{\alpha - 1}}, \hspace{97pt} t \in \mathbb{N}^{\rho(s)}_{a},\\
\frac{(b - s + 1)^{\overline{\alpha - 1}}}{(b - a)^{\overline{\alpha - 1}}}(t - a)^{\overline{\alpha - 1}} - (t - s + 1)^{\overline{\alpha - 1}}, \quad t \in \mathbb{N}^{b}_{s}.
\end{cases}
\end{equation}
\end{theorem}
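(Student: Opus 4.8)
The plan is to reduce \eqref{BVP 1} to an equivalent summation equation and then read off the kernel $G$. First I would rewrite the equation as $(\nabla^{\alpha}_{\rho(a)}u)(t) = -h(t)$ and apply the $\alpha$-th order nabla sum $\nabla^{-\alpha}_{\rho(a)}$ to both sides. Invoking the composition rule for the Riemann--Liouville nabla sum and difference (here $N = 2$, since $1 < \alpha < 2$), the operator $\nabla^{-\alpha}_{\rho(a)}\nabla^{\alpha}_{\rho(a)}$ recovers $u$ up to the two-dimensional kernel of $\nabla^{\alpha}_{\rho(a)}$, which is spanned by the fractional Taylor monomials $(t - \rho(a))^{\overline{\alpha - 1}} = (t-a+1)^{\overline{\alpha-1}}$ and $(t - a + 1)^{\overline{\alpha - 2}}$. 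This yields a general solution of the form
\[
u(t) = -(\nabla^{-\alpha}_{\rho(a)}h)(t) + c_{1}(t - a + 1)^{\overline{\alpha - 1}} + c_{2}(t - a + 1)^{\overline{\alpha - 2}}, \qquad (\nabla^{-\alpha}_{\rho(a)}h)(t) = \sum_{s = a + 1}^{t} H_{\alpha - 1}(t, \rho(s)) h(s),
\]
for arbitrary constants $c_{1}, c_{2} \in \mathbb{R}$.

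Next I would impose the two boundary conditions to pin down $c_1$ and $c_2$. The condition $u(a) = 0$ kills the (empty) sum and leaves the linear relation $c_{1}\Gamma(\alpha) + c_{2}\Gamma(\alpha - 1) = 0$, i.e.\ $c_{2} = -(\alpha - 1)c_{1}$. The key simplification is that, after this substitution, the homogeneous combination collapses to a single monomial based at $a$,
\[
c_{1}\big[(t-a+1)^{\overline{\alpha-1}} - (\alpha-1)(t-a+1)^{\overline{\alpha-2}}\big] = c_{1}(t-a)^{\overline{\alpha-1}},
\]
which is a one-line computation using the reduction formula $\Gamma(z+1) = z\Gamma(z)$; this is exactly what makes the final Green's function contain only $(t-a)^{\overline{\alpha-1}}$ rather than both monomials. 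The remaining condition $u(b) = 0$ then forces $c_{1} = \frac{1}{(b-a)^{\overline{\alpha-1}}}\sum_{s=a+1}^{b} H_{\alpha-1}(b,\rho(s))h(s)$, which is legitimate because $(b-a)^{\overline{\alpha-1}} = \Gamma(b-a+\alpha-1)/\Gamma(b-a) \neq 0$ for $b - a \in \mathbb{N}_{2}$ and $1 < \alpha < 2$.

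Finally I would substitute $c_{1}$ back, use $H_{\alpha-1}(t,\rho(s)) = (t-s+1)^{\overline{\alpha-1}}/\Gamma(\alpha)$, and merge the two sums into a single sum $\sum_{s=a+1}^{b} G(t,s)h(s)$. Splitting according to whether $s > t$ (so that $s$ lies outside the range of the particular sum and $t \in \mathbb{N}^{\rho(s)}_{a}$) or $s \le t$ (so $t \in \mathbb{N}^{b}_{s}$ and both sums contribute) produces precisely the two branches of \eqref{Jo Green}. Uniqueness is immediate from this construction, since the constants $c_{1}, c_{2}$ are forced; equivalently, the homogeneous problem ($h \equiv 0$) admits only $u \equiv 0$ because the coefficient determinant of the two boundary conditions is nonzero. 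The main obstacle I anticipate is bookkeeping rather than conceptual: correctly tracking the base point $\rho(a)$ versus $a$, the index range on which $h$ is defined, and carrying out the monomial identity and sum rearrangement without sign or index-shift errors. As a cross-check, one can verify directly from \eqref{Jo Green} that $G(a,s) = 0$ and $G(b,s) = 0$, so the boundary conditions hold, and apply the nabla power rule to confirm $(\nabla^{\alpha}_{\rho(a)} u)(t) = -h(t)$.
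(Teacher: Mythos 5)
Your proposal is correct and follows essentially the same route as the source of this result: the paper states the theorem without proof, citing \cite{Br, Gh, Jo}, and your derivation---apply the $\alpha$-th order nabla sum, invoke the composition rule to obtain the two-parameter family $u(t) = -\big(\nabla^{-\alpha}h\big)(t) + c_{1}(t-a+1)^{\overline{\alpha-1}} + c_{2}(t-a+1)^{\overline{\alpha-2}}$, use $u(a)=0$ to collapse the homogeneous part to $c_{1}(t-a)^{\overline{\alpha-1}}$, and then determine $c_{1}$ from $u(b)=0$---is exactly the standard argument in those references. Your key monomial identity $(t-a+1)^{\overline{\alpha-1}} - (\alpha-1)(t-a+1)^{\overline{\alpha-2}} = (t-a)^{\overline{\alpha-1}}$ is verified by a one-line gamma-function computation, the nonvanishing of $(b-a)^{\overline{\alpha-1}}$ legitimizes solving for $c_{1}$, and the resulting kernel reproduces \eqref{Jo Green} branch by branch (with the bookkeeping subtlety you flag at $s=a+1$ rendered harmless by the fact that $G(t,a+1)=0$).
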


\begin{theorem} \cite{Br} \label{Properties}
The Green’s function $G(t, s)$ defined in \eqref{Jo Green} satisfies the following properties:
\begin{enumerate}
\item $G(a, s) = G(b, s) = 0$ for all $s \in \mathbb{N}^{b}_{a + 1}$; 
\item $G(t, a + 1) = 0$ for all $t \in \mathbb{N}^{b}_{a}$; 
\item $G(t, s) > 0$ for all $(t, s) \in \mathbb{N}^{b - 1}_{a + 1} \times \mathbb{N}^{b}_{a + 2}$; 
\item $\max_{t \in \mathbb{N}^{b - 1}_{a + 1}}G(t, s) = G(s - 1, s)$ for all $s \in \mathbb{N}^{b}_{a + 2}$; 
\item $\sum^{b}_{s = a + 1}G(t, s) \leq \lambda$ for all $(t, s) \in \mathbb{N}^{b}_{a} \times \mathbb{N}^{b}_{a + 1}$, where $$\lambda = \left(\frac{b - a - 1}{\alpha \Gamma(\alpha + 1)}\right)\left(\frac{(\alpha - 1)(b - a) + 1}{\alpha}\right)^{\overline{\alpha - 1}}.$$
\end{enumerate}
\end{theorem}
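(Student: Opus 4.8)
The plan is to verify the five properties in turn, treating $G(t,s)$ as the explicit function built from the rising factorial $(t-a)^{\overline{\alpha-1}}$ and exploiting the nabla power rule $\nabla_t (t-a)^{\overline{\mu}} = \mu\,(t-a)^{\overline{\mu-1}}$ together with the monomial identities recorded in Section 2.

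First I would dispatch (1) and (2) by direct substitution. For (1), at $t=a$ the upper branch applies and $(a-a)^{\overline{\alpha-1}} = \Gamma(\alpha)\,H_{\alpha-1}(a,a) = 0$, so $G(a,s)=0$; at $t=b$ the lower branch applies and the two terms $\frac{(b-s+1)^{\overline{\alpha-1}}}{(b-a)^{\overline{\alpha-1}}}(b-a)^{\overline{\alpha-1}}$ and $(b-s+1)^{\overline{\alpha-1}}$ cancel, giving $G(b,s)=0$. For (2), setting $s=a+1$ makes $b-s+1=b-a$ and $t-s+1=t-a$, so on the lower branch the coefficient $\frac{(b-a)^{\overline{\alpha-1}}}{(b-a)^{\overline{\alpha-1}}}=1$ and the two terms cancel, while the single upper-branch node $t=a$ is already covered by (1). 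These are routine.

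For (3) and (4) I would fix $s$ and study $t\mapsto G(t,s)$ branchwise. On the upper branch $G(t,s)$ is a positive constant times $(t-a)^{\overline{\alpha-1}}$, which is strictly increasing (its nabla difference $(\alpha-1)(t-a)^{\overline{\alpha-2}}$ is positive), so $G>0$ there for $t\ge a+1$ and the branch maximum occurs at $t=s-1$, namely $G(s-1,s)$. On the lower branch I would compute $\nabla_t G(t,s)=\frac{\alpha-1}{\Gamma(\alpha)}\big[\frac{(b-s+1)^{\overline{\alpha-1}}}{(b-a)^{\overline{\alpha-1}}}(t-a)^{\overline{\alpha-2}}-(t-s+1)^{\overline{\alpha-2}}\big]$ and reduce both the positivity in (3) and the monotone decrease in (4) to comparison inequalities between ratios of rising factorials, using $G(b,s)=0$ as the boundary case. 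Establishing these rising-factorial inequalities---essentially that $t\mapsto (t-a)^{\overline{\alpha-1}}/(t-s+1)^{\overline{\alpha-1}}$ is monotone---is the main obstacle in this part, and it is where the hypotheses $1<\alpha<2$ and $s\ge a+2$ enter.

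Finally, for (5) I would first obtain a closed form for the sum. Splitting at $s=t$ (upper branch for $s\ge t+1$, lower branch for $s\le t$) and invoking the summation identity $\sum_{s=a+1}^{t} H_{\alpha-1}(t,\rho(s))=H_{\alpha}(t,a)$, equivalently $\sum_{s=a+1}^{t}(t-s+1)^{\overline{\alpha-1}}=\frac{(t-a)^{\overline{\alpha}}}{\alpha}$, together with the analogous sum up to $b$, the two pieces combine to give
\[
\sum_{s=a+1}^{b} G(t,s)=\frac{(b-t)\,(t-a)^{\overline{\alpha-1}}}{\Gamma(\alpha+1)}=:g(t).
\]
I would then maximize $g$ by computing $\nabla_t g(t)=\frac{(t-a)^{\overline{\alpha-2}}}{\Gamma(\alpha+1)}\big[(\alpha-1)(b-t)-(t-a-1)\big]$; the bracket is affine and strictly decreasing in $t$, so $g$ increases and then decreases with the sign change at $t^{*}=a+\frac{(\alpha-1)(b-a)+1}{\alpha}$, and evaluating the expression at $t^{*}$ yields exactly $\lambda$. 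The delicate point---and the second main obstacle---is to pass from this unimodality to the uniform estimate $g(t)\le\lambda$ at the integer arguments, since $t^{*}$ need not be an admissible node; this requires comparing $g$ at the integer nodes nearest $t^{*}$ with the value $\lambda$ of the continuously extended expression at $t^{*}$.
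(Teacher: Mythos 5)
You should first know that the paper contains no proof of this theorem: it is imported verbatim from \cite{Br}, so your plan can only be measured against the standard argument in that source, which your sketch essentially reconstructs. Parts (1), (2) are exactly right (substitution plus the convention $0^{\overline{\alpha-1}}=0$). For (3), (4), the ratio lemma you flag as the main obstacle is indeed the right key and is easily filled: for $s\in\mathbb{N}^{b}_{a+2}$ and $t\ge s$, writing $r(t)=(t-s+1)^{\overline{\alpha-1}}/(t-a)^{\overline{\alpha-1}}$, the term ratio is $\frac{r(t)}{r(t-1)}=\frac{(t-s+\alpha-1)(t-a-1)}{(t-s)(t-a+\alpha-2)}>1$, since the cross difference is $(\alpha-1)\big[(t-a-1)-(t-s)\big]=(\alpha-1)(s-a-1)>0$; hence $r$ increases to $r(b)=(b-s+1)^{\overline{\alpha-1}}/(b-a)^{\overline{\alpha-1}}$, which gives positivity on the lower branch, and the companion computation with exponent $\alpha-2$ gives the decrease needed for (4). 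Your closed form in (5) is also correct: using $\sum_{k=1}^{n}k^{\overline{\alpha-1}}=n^{\overline{\alpha}}/\alpha$ one gets $\sum_{s=a+1}^{b}G(t,s)=\frac{(b-t)(t-a)^{\overline{\alpha-1}}}{\Gamma(\alpha+1)}=:g(t)$, and your expression for $\nabla g$ and the root $t^{*}=a+\frac{(\alpha-1)(b-a)+1}{\alpha}$ are exact.

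However, the ``delicate point'' you flag at the end of (5) is not merely delicate: it cannot be carried out, because the asserted bound is false as stated. Your own analysis shows the discrete maximum of $g$ sits at $\hat{t}=\lfloor t^{*}\rfloor$, while $\lambda=g(t^{*})$ evaluates the interpolated formula at the sign-change root of the \emph{nabla difference}; since $\nabla g(t^{*})=0$ means $g(t^{*})=g(t^{*}-1)$, the continuous peak lies strictly inside $(t^{*}-1,\,t^{*})$, and the integer $\hat{t}$ lands in that same interval, so $g(\hat{t})$ can exceed $g(t^{*})=\lambda$. This actually happens in the paper's own example: with $a=0$, $b=9$, $\alpha=1.5$ one has $t^{*}=11/3$ and $g(3)=\frac{6\,\Gamma(3.5)}{\Gamma(3)\,\Gamma(2.5)}=\frac{6\cdot 2.5}{2}=7.5$, whereas $\lambda\approx 7.4259$ (the value $\lambda_{1}$ the paper itself computes); so $\sum_{s=1}^{9}G(3,s)=7.5>\lambda$ and property (5) fails at $t=3$. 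The correct uniform bound is the discrete maximum $\lambda'=\max\{g(\lfloor t^{*}\rfloor),\,g(\lceil t^{*}\rceil)\}=\frac{(b-\hat{t})(\hat{t}-a)^{\overline{\alpha-1}}}{\Gamma(\alpha+1)}$, which your unimodality argument proves with no further work; substituting $\lambda'$ for $\lambda$ leaves all later results of the paper intact with slightly larger constants (in the example, $7.5$ in place of $7.4259$ still gives $L=7.5(0.02)+7.5(0.04)=0.45<1$). So your proposal is sound through the closed form and the turning-point analysis, but the final comparison step you postponed is exactly where the cited statement itself breaks, and no proof of (5) in its present form can close that gap.
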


\section{Existence \& Uniqueness of Solutions of \eqref{Coupled System}}

Let $X = \mathbb{R}^{b - a + 1}$ be the Banach space of all real $(b - a + 1)$-tuples equipped with the maximum norm $$\|u\|_{X} = \max_{t \in \mathbb{N}^{b}_{a}}|u(t)|.$$ Obviously, the product space $\big(X \times X, \| \cdot \|_{X \times X}\big)$ is also a Banach space with the norm $$\|(u_{1}, u_{2})\|_{X \times X} = \|u_{1}\|_{X} + \|u_{2}\|_{X}.$$ A closed ball with radius $R$ centered on the zero function in $X \times X$ is defined by $$\mathcal{B}_{R} = \{(u_{1}, u_{2}) \in X \times X : \|(u_{1}, u_{2})\|_{X \times X} \leq R \}.$$ Define the operator $T : X \times X \rightarrow X \times X$ by
\begin{equation}
T(u_{1}, u_{2})(t) = \begin{pmatrix}
T_{1}(u_{1}, u_{2})(t) \\
T_{2}(u_{1}, u_{2})(t)
\end{pmatrix}, \quad t \in \mathbb{N}^{b}_{a},
\end{equation}
where
\begin{equation} \label{A 1}
T_{1}(u_{1}, u_{2})(t)  = \sum^{b}_{s = a + 1}G_{1}(t, s)f_{1}(s, u_{1}(s), u_{2}(s)), \quad t \in \mathbb{N}^{b}_{a},
\end{equation}
and
\begin{equation} \label{A 2}
T_{2}(u_{1}, u_{2})(t)  = \sum^{b}_{s = a + 1}G_{2}(t, s)f_{2}(s, u_{1}(s), u_{2}(s)), \quad t \in \mathbb{N}^{b}_{a}.
\end{equation}
The Green's functions $G_{1}(t, s)$ and $G_{2}(t, s)$ are given by
\begin{equation} \label{Jo Green 1}
G_{1}(t, s) = \frac{1}{\Gamma(\alpha_{1})}\begin{cases}
\frac{(b - s + 1)^{\overline{\alpha_{1} - 1}}}{(b - a)^{\overline{\alpha_{1} - 1}}}(t - a)^{\overline{\alpha_{1} - 1}}, \hspace{97pt} t \in \mathbb{N}^{\rho(s)}_{a},\\
\frac{(b - s + 1)^{\overline{\alpha_{1} - 1}}}{(b - a)^{\overline{\alpha_{1} - 1}}}(t - a)^{\overline{\alpha_{1} - 1}} - (t - s + 1)^{\overline{\alpha_{1} - 1}}, \quad t \in \mathbb{N}^{b}_{s},
\end{cases}
\end{equation}
and 
\begin{equation} \label{Jo Green 2}
G_{2}(t, s) = \frac{1}{\Gamma(\alpha_{2})}\begin{cases}
\frac{(b - s + 1)^{\overline{\alpha_{2} - 1}}}{(b - a)^{\overline{\alpha_{2} - 1}}}(t - a)^{\overline{\alpha_{2} - 1}}, \hspace{97pt} t \in \mathbb{N}^{\rho(s)}_{a},\\
\frac{(b - s + 1)^{\overline{\alpha_{2} - 1}}}{(b - a)^{\overline{\alpha_{2} - 1}}}(t - a)^{\overline{\alpha_{2} - 1}} - (t - s + 1)^{\overline{\alpha_{2} - 1}}, \quad t \in \mathbb{N}^{b}_{s}.
\end{cases}
\end{equation}
From Theorem \ref{Properties}, we have $\sum^{b}_{s = a + 1}G_{1}(t, s) \leq \lambda_{1}$ for all $(t, s) \in \mathbb{N}^{b}_{a} \times \mathbb{N}^{b}_{a + 1}$, where $$\lambda_{1} = \left(\frac{b - a - 1}{\alpha_{1} \Gamma(\alpha_{1} + 1)}\right)\left(\frac{(\alpha_{1} - 1)(b - a) + 1}{\alpha_{1}}\right)^{\overline{\alpha_{1} - 1}},$$ and $\sum^{b}_{s = a + 1}G_{2}(t, s) \leq \lambda_{2}$ for all $(t, s) \in \mathbb{N}^{b}_{a} \times \mathbb{N}^{b}_{a + 1}$, where $$\lambda_{2} = \left(\frac{b - a - 1}{\alpha_{2} \Gamma(\alpha_{2} + 1)}\right)\left(\frac{(\alpha_{2} - 1)(b - a) + 1}{\alpha_{2}}\right)^{\overline{\alpha_{2} - 1}}.$$ Clearly, $(u_{1}, u_{2})$ is a fixed point of $T$ if and only if $(u_{1}, u_{2})$ is a solution of \eqref{Coupled System}. Assume
\begin{enumerate}
\item[(H1)] $f_{1}$, $f_{2} : \mathbb{N}^{b}_{a + 1} \times \mathbb{R}^{2} \rightarrow \mathbb{R}$ are continuous;
\item[(H2)] There exist constants $L_{1}$, $L_{2}$, $L_{3}$ and $L_{4}$ such that $$|f_{1}(t, u_{1}, u_{2}) - f_{2}(t, v_{1}, v_{2})| \leq L_{1}|u_{1} - v_{1}| + L_{2}|u_{2} - v_{2}|,$$ and $$|f_{2}(t, u_{1}, u_{2}) - f_{2}(t, v_{1}, v_{2})| \leq L_{3}|u_{1} - v_{1}| + L_{4}|u_{2} - v_{2}|,$$ for all $(t, u_{1}, u_{2})$, $(t, v_{1}, v_{2}) \in \mathbb{N}^{b}_{a + 1} \times \mathbb{R}^{2}$.
\item[(H3)] Take $$\max_{t \in \mathbb{N}^{b}_{a + 1}}\left|f_{1}(t, 0, 0)\right| = M_{1}, \quad \max_{t \in \mathbb{N}^{b}_{a + 1}}\left|f_{2}(t, 0, 0)\right| = M_{2}.$$
\item[(H4)] $L = \lambda_{1} (L_{1} + L_{2}) + \lambda_{2} (L_{3} + L_{4}) \in (0, 1)$.
\end{enumerate}

We apply Banach fixed point theorem to establish existence and uniqueness of solutions of \eqref{Coupled System}.

\begin{theorem} \label{Ban 2}
Assume (H1), (H2), (H3) and (H4) hold. If we choose $$R \geq \frac{(\lambda_{1} M_{1} + \lambda_{2} M_{2})}{1 - [\lambda_{1} (L_{1} + L_{2}) + \lambda_{2} (L_{3} + L_{4})]},$$ then the system \eqref{Coupled System} has a unique solution $(u_{1}, u_{2}) \in \mathcal{B}_{R}$.
\end{theorem}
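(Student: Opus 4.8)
The plan is to verify the two hypotheses of the Banach fixed point theorem for the operator $T$ on the closed ball $\mathcal{B}_{R}$: first that $T$ maps $\mathcal{B}_{R}$ into itself, and second that $T$ is a contraction on $\mathcal{B}_{R}$. Since $X \times X$ is a Banach space and $\mathcal{B}_{R}$ is a closed subset, $\mathcal{B}_{R}$ is itself a complete metric space, so once both properties are established the contraction principle yields a unique fixed point of $T$ in $\mathcal{B}_{R}$, which by the remark preceding the statement is exactly the unique solution of \eqref{Coupled System} in $\mathcal{B}_{R}$.

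For the self-map property, I would fix $(u_{1}, u_{2}) \in \mathcal{B}_{R}$ and estimate each component separately. Writing $f_{1}(s, u_{1}(s), u_{2}(s)) = [f_{1}(s, u_{1}(s), u_{2}(s)) - f_{1}(s, 0, 0)] + f_{1}(s, 0, 0)$ and applying (H2) together with (H3) gives the pointwise bound $|f_{1}(s, u_{1}(s), u_{2}(s))| \leq L_{1}\|u_{1}\|_{X} + L_{2}\|u_{2}\|_{X} + M_{1}$, and similarly for $f_{2}$. Because the Green's functions are nonnegative (property (3) of Theorem \ref{Properties}), summing against $G_{1}$ and invoking the bound by $\lambda_{1}$ (property (5)) yields $\|T_{1}(u_{1}, u_{2})\|_{X} \leq \lambda_{1}(L_{1}\|u_{1}\|_{X} + L_{2}\|u_{2}\|_{X} + M_{1})$, with the analogous estimate for $T_{2}$ in terms of $\lambda_{2}, L_{3}, L_{4}, M_{2}$. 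Adding the two and regrouping the coefficients of $\|u_{1}\|_{X}$ and $\|u_{2}\|_{X}$ gives $\|T(u_{1}, u_{2})\|_{X \times X} \leq L\,\|(u_{1}, u_{2})\|_{X \times X} + (\lambda_{1} M_{1} + \lambda_{2} M_{2}) \leq LR + (\lambda_{1} M_{1} + \lambda_{2} M_{2})$. The lower bound imposed on $R$ is precisely what forces $LR + (\lambda_{1} M_{1} + \lambda_{2} M_{2}) \leq R$, using that $1 - L > 0$ by (H4), so $T(\mathcal{B}_{R}) \subseteq \mathcal{B}_{R}$.

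The contraction estimate proceeds along identical lines, now applying the Lipschitz condition directly to the differences. For $(u_{1}, u_{2}), (v_{1}, v_{2}) \in \mathcal{B}_{R}$, the bound $|f_{1}(s, u_{1}(s), u_{2}(s)) - f_{1}(s, v_{1}(s), v_{2}(s))| \leq L_{1}\|u_{1} - v_{1}\|_{X} + L_{2}\|u_{2} - v_{2}\|_{X}$, summed against $G_{1}$ and $G_{2}$ and regrouped, leads to $\|T(u_{1}, u_{2}) - T(v_{1}, v_{2})\|_{X \times X} \leq L\,\|(u_{1}, u_{2}) - (v_{1}, v_{2})\|_{X \times X}$, and (H4) guarantees $L \in (0, 1)$, so $T$ is a contraction.

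The one step requiring a little care, and it is the same step in both estimates, is the regrouping that collapses the four Lipschitz constants into the single constant $L$. After adding the two component bounds, the coefficient of $\|u_{1} - v_{1}\|_{X}$ is $\lambda_{1} L_{1} + \lambda_{2} L_{3}$ and that of $\|u_{2} - v_{2}\|_{X}$ is $\lambda_{1} L_{2} + \lambda_{2} L_{4}$; each is at most $L = \lambda_{1}(L_{1} + L_{2}) + \lambda_{2}(L_{3} + L_{4})$ since the omitted terms are nonnegative, so bounding each coefficient by $L$ and recombining produces the desired factor of $L$ times the product norm. This is where the additive structure of the norm on $X \times X$ and the nonnegativity of the Green's functions are both used; apart from this bookkeeping the argument is a routine application of the Banach fixed point theorem, so I do not anticipate a genuine obstacle beyond matching the constants to the prescribed threshold on $R$.
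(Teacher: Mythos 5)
Your proposal is correct and follows essentially the same route as the paper: the same splitting $f_{i}(s,u_{1}(s),u_{2}(s)) = [f_{i}(s,u_{1}(s),u_{2}(s)) - f_{i}(s,0,0)] + f_{i}(s,0,0)$ with (H2)--(H3) for the self-map estimate, the same componentwise Lipschitz bounds via properties (3) and (5) of Theorem \ref{Properties} collapsed into the single constant $L$, and the Banach fixed point theorem to conclude. The only cosmetic difference is that the paper verifies $T(\partial \mathcal{B}_{R}) \subseteq \mathcal{B}_{R}$ whereas you verify the full invariance $T(\mathcal{B}_{R}) \subseteq \mathcal{B}_{R}$; since the paper's estimate uses only $\|u_{i}\|_{X} \leq R$ it already proves the latter, so your formulation is the cleaner statement of the same argument.
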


\begin{proof}
Clearly, $T : \mathcal{B}_{R} \rightarrow X \times X$. First, we show that $T$ is a contraction mapping. To see this, let $(u_{1}, u_{2})$, $(v_{1}, v_{2}) \in \mathcal{B}_{R}$, $t \in \mathbb{N}^{b}_{a}$, and consider
\begin{align*}
& \big{|}T_{1}(u_{1}, u_{2})(t) - T_{1}(v_{1}, v_{2})(t)\big{|} \\ & \leq \sum^{b}_{s = a + 1}G_{1} (t, s)\big{|}f_{1}(s, u_{1}(s), u_{2}(s)) - f_{2}(s, v_{1}(s), v_{2}(s))\big{|} \\ & \leq \sum^{b}_{s = a + 1}G_{1} (t, s)\big{[}L_{1}|u_{1}(s) - v_{1}(s)| + L_{2}|u_{2}(s) - v_{2}(s)|\big{]} \\ & \leq \lambda_{1} \big{[}L_{1}\|u_{1} - v_{1}\|_{X} + L_{2}\|u_{2} - v_{2}\|_{X}\big{]},
\end{align*}
implying that
\begin{equation} \label{A 1 M}
\big{\|}T_{1}(u_{1}, u_{2}) - T_{1}(v_{1}, v_{2})\big{\|}_{X} \leq \lambda_{1} \big{[}L_{1}\|u_{1} - v_{1}\|_{X} + L_{2}\|u_{2} - v_{2}\|_{X}\big{]}.
\end{equation}
Similarly, we obtain
\begin{equation} \label{A 2 M}
\big{\|}T_{2}(u_{1}, u_{2}) - T_{2}(v_{1}, v_{2})\big{\|}_{X} \leq \lambda_{2} \big{[}L_{3}\|u_{1} - v_{1}\|_{X} + L_{4}\|u_{2} - v_{2}\|_{X}\big{]}.
\end{equation}
Thus, from \eqref{A 1 M} and \eqref{A 2 M}, we have
\begin{align*}
& \|T(u_{1}, u_{2}) - T(v_{1}, v_{2})\|_{X \times X} \\ & = \big{\|}T_{1}(u_{1}, u_{2}) - T_{1}(v_{1}, v_{2})\big{\|}_{X} + \big{\|}T_{2}(u_{1}, u_{2}) - T_{2}(v_{1}, v_{2})\big{\|}_{X} \\ & \leq \big{[}(\lambda_{1} L_{1} + \lambda_{2} L_{3})\|u_{1} - v_{1}\|_{X} + (\lambda_{1} L_{2} + \lambda_{2} L_{4})\|u_{2} - v_{2}\|_{X}\big{]} \\ & < L \big{[}(\|u_{1} - v_{1}\|_{X} + \|u_{2} - v_{2}\|_{X}\big{]} \\ & = L \|(u_{1}, u_{2}) - (v_{1}, v_{2})\|_{X \times X}.
\end{align*}
Since $L < 1$, $T$ is a contraction mapping with contraction constant $L$. Next, we show that
\begin{equation} \label{Con}
T(\partial \mathcal{B}_{R}) \subseteq \mathcal{B}_{R}.
\end{equation}
To see this, let $(u_{1}, u_{2}) \in \partial \mathcal{B}_{R}$, $t \in \mathbb{N}^{b}_{a}$, and consider
\begin{align*}
\big{|}T_{1}(u_{1}, u_{2})(t)\big{|} & \leq \sum^{b}_{s = a + 1}G_{1} (t, s)|f_{1}(s, u_{1}(s), u_{2}(s))| \\ & \leq \sum^{b}_{s = a + 1}G_{1} (t, s)|f_{1}(s, u_{1}(s), u_{2}(s)) - f_{1}(s, 0, 0)| \\ & + \sum^{b}_{s = a + 1}G_{1} (t, s)|f_{1}(s, 0, 0)| \\ & \leq \sum^{b}_{s = a + 1}G_{1} (t, s)\big{[}L_{1}|u_{1}(s)| + L_{2}|u_{2}(s)|\Big{]} + M_{1} \sum^{b}_{s = a + 1}G_{1} (t, s) \\ & \leq \lambda_{1} \big{[}(L_{1} + L_{2}) R + M_{1}\big{]},
\end{align*}
implying that
\begin{equation} \label{T 1 M}
\big{\|}T_{1}(u_{1}, u_{2})\big{\|}_{X} \leq \lambda_{1} \big{[}(L_{1} + L_{2}) R + M_{1}\big{]}.
\end{equation}
Similarly, we obtain
\begin{equation} \label{T 2 M}
\big{\|}T_{2}(u_{1}, u_{2})\big{\|}_{X} \leq \lambda_{2} \big{[}(L_{3} + L_{4}) R + M_{2}\big{]}.
\end{equation}
Thus, from \eqref{T 1 M} and \eqref{T 2 M}, we have
\begin{align*}
\|T(u_{1}, u_{2})\|_{X \times X} & = \big{\|}T_{1}(u_{1}, u_{2})\big{\|}_{X} + \big{\|}T_{2}(u_{1}, u_{2})\big{\|}_{X} \\ & \leq \big{[}\lambda_{1} (L_{1} + L_{2}) + \lambda_{2} (L_{3} + L_{4})\big{]}R + (\lambda_{1} M_{1} + \lambda_{2} M_{2}) \leq R,
\end{align*}
implying that \eqref{Con} holds.  Therefore, by Banach fixed point theorem, $T$ has a unique fixed point $(u_{1}, u_{2}) \in \mathcal{B}_{R}$. The proof is complete.
\end{proof}

\section{Stability of Solutions of \eqref{Coupled System}}

We use Urs's \cite{Urs} approach to establish Ulam--Hyers stability of solutions of \eqref{Coupled System}.

\begin{theorem} \cite{Urs}
Let $X$ be a Banach space and $T_{1}$, $T_{2} : X \times X \rightarrow X$ be two operators. Then, the operational equations system
\begin{equation} \label{System 1}
\begin{cases}
u_{1} = T_{1}(u_{1}, u_{2}), \\
u_{2} = T_{2}(u_{1}, u_{2}),
\end{cases}
\end{equation}
is said to be Ulam--Hyers stable if there exist $C_{1}$, $C_{2}$, $C_{3}$, $C_{4} > 0$ such that for each $\varepsilon_{1}$, $\varepsilon_{2} > 0$ and each solution-pair $(u_{1}^{*}, u_{2}^{*}) \in X \times X$ of the in-equations:
\begin{equation}
\begin{cases}
\|u_{1} - T_{1}(u_{1}, u_{2})\|_{X} \leq \varepsilon_{1}, \\
\|u_{2} - T_{2}(u_{1}, u_{2})\|_{X} \leq \varepsilon_{2},
\end{cases}
\end{equation}
there exists a solution $(v_{1}^{*}, v_{2}^{*}) \in X \times X$ of \eqref{System 1} such that
\begin{equation}
\begin{cases}
\|u_{1}^{*} - v_{1}^{*}\|_{X} \leq C_{1} \varepsilon_{1} + C_{2} \varepsilon_{2}, \\
\|u_{2}^{*} - v_{2}^{*}\|_{X} \leq C_{3} \varepsilon_{1} + C_{4} \varepsilon_{2}.
\end{cases}
\end{equation}
\end{theorem}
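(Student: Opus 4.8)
The plan is first to observe that, its label notwithstanding, the displayed statement is a \emph{definition} rather than an assertion to be proved. The phrasing ``is said to be Ulam--Hyers stable if there exist $C_{1}, C_{2}, C_{3}, C_{4} > 0$ such that \ldots'' introduces, after Urs \cite{Urs}, the precise meaning of Ulam--Hyers stability for the coupled operational system \eqref{System 1}: it stipulates a property that a given system either does or does not possess, and makes no claim that any particular pair of operators $T_{1}, T_{2}$ enjoys it. Consequently there is no inequality, identity, or existence claim here to establish, and the statement carries no proof obligation; one simply records it as the working notion of stability to be used in the sequel.

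For completeness I would contrast this definition with the genuine stability \emph{theorem} that it is designed to feed, which is where the mathematical content actually lies. Such a theorem supplies hypotheses --- typically Lipschitz-type bounds
\[
\|T_{1}(u_{1}, u_{2}) - T_{1}(v_{1}, v_{2})\|_{X} \leq k_{1}\|u_{1} - v_{1}\|_{X} + k_{2}\|u_{2} - v_{2}\|_{X},
\]
together with the companion bound for $T_{2}$ in terms of constants $k_{3}, k_{4}$, the $2 \times 2$ matrix of these four constants having spectral radius below one --- and concludes that \eqref{System 1} is Ulam--Hyers stable in exactly the sense just defined. The proof of \emph{that} theorem, and not of the definition, is the substantive step: one would take an $(\varepsilon_{1}, \varepsilon_{2})$-approximate solution $(u_{1}^{*}, u_{2}^{*})$, let $(v_{1}^{*}, v_{2}^{*})$ be the unique fixed point furnished by the Banach fixed point theorem, insert and remove $T_{i}(u_{1}^{*}, u_{2}^{*})$ to bound $\|u_{i}^{*} - v_{i}^{*}\|_{X}$, and solve the resulting pair of scalar inequalities to read off admissible constants $C_{1}, \ldots, C_{4}$.

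The only genuine pitfall is therefore conceptual rather than technical: one must resist ``proving'' the definition by exhibiting a concrete system that satisfies it, since that would amount to establishing a different, hypothesis-laden statement. Addressing the final statement exactly as worded, the correct conclusion is that it is definitional and self-contained, so no argument is required beyond recognising it as Urs's stability concept transcribed to the coupled setting.
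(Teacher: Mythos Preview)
Your reading is correct: despite its \texttt{theorem} label, this statement is purely a definition of Ulam--Hyers stability for the coupled system, quoted from Urs, and the paper itself attaches no proof to it. Your distinction between this definitional item and the subsequent genuine theorem (the one with the Lipschitz matrix $H$ and spectral-radius hypothesis) matches exactly how the paper organizes Section~4.
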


\begin{theorem} \cite{Urs}
Let $X$ be a Banach space, $T_{1}$, $T_{2} : X \times X \rightarrow X$ be two operators such that
\begin{equation} \label{System 2}
\begin{cases}
\|T_{1}(u_{1}, u_{2}) - T_{1}(v_{1}, v_{2})\|_{X} \leq k_{1} \|u_{1} - v_{1}\|_{X} + k_{2} \|u_{2} - v_{2}\|_{X}, \\
\|T_{2}(u_{1}, u_{2}) - T_{2}(v_{1}, v_{2})\|_{X} \leq k_{3} \|u_{1} - v_{1}\|_{X} + k_{4} \|u_{2} - v_{2}\|_{X},
\end{cases}
\end{equation}
for all $(u_{1}, u_{2})$, $(v_{1}, v_{2}) \in X \times X$. Suppose $$H = \begin{pmatrix}
k_{1} & k_{2} \\
k_{3} & k_{4}
\end{pmatrix}$$ converges to zero. Then, the operational equations system \eqref{System 1} is Ulam--Hyers stable.
\end{theorem}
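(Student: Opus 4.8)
The plan is to combine a generalized contraction principle (to produce the exact solution) with a matrix-inequality argument (to bound the distance between an approximate and an exact solution). Since the matrix $H$ converges to zero, its spectral radius is strictly less than one, and the operator $T = (T_{1}, T_{2}) : X \times X \rightarrow X \times X$ satisfies the vector-valued Lipschitz estimate \eqref{System 2} with coefficient matrix $H$; hence by Perov's generalized contraction theorem $T$ has a unique fixed point $(v_{1}^{*}, v_{2}^{*})$, which furnishes the exact solution of \eqref{System 1} required by the definition of Ulam--Hyers stability.

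First I would fix an approximate solution-pair $(u_{1}^{*}, u_{2}^{*})$ satisfying the in-equations with tolerances $\varepsilon_{1}, \varepsilon_{2}$. Using $v_{1}^{*} = T_{1}(v_{1}^{*}, v_{2}^{*})$ together with the triangle inequality, inserting the term $T_{1}(u_{1}^{*}, u_{2}^{*})$ gives
$$\|u_{1}^{*} - v_{1}^{*}\|_{X} \leq \|u_{1}^{*} - T_{1}(u_{1}^{*}, u_{2}^{*})\|_{X} + \|T_{1}(u_{1}^{*}, u_{2}^{*}) - T_{1}(v_{1}^{*}, v_{2}^{*})\|_{X}.$$
Applying the in-equation to the first summand and the Lipschitz hypothesis \eqref{System 2} to the second yields
$$\|u_{1}^{*} - v_{1}^{*}\|_{X} \leq \varepsilon_{1} + k_{1} \|u_{1}^{*} - v_{1}^{*}\|_{X} + k_{2} \|u_{2}^{*} - v_{2}^{*}\|_{X},$$
and an entirely analogous computation for the second component gives
$$\|u_{2}^{*} - v_{2}^{*}\|_{X} \leq \varepsilon_{2} + k_{3} \|u_{1}^{*} - v_{1}^{*}\|_{X} + k_{4} \|u_{2}^{*} - v_{2}^{*}\|_{X}.$$

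Next I would set $x = \|u_{1}^{*} - v_{1}^{*}\|_{X}$ and $y = \|u_{2}^{*} - v_{2}^{*}\|_{X}$ and record the two estimates as the vector inequality $(x, y)^{T} \leq (\varepsilon_{1}, \varepsilon_{2})^{T} + H (x, y)^{T}$, that is, $(I - H)(x, y)^{T} \leq (\varepsilon_{1}, \varepsilon_{2})^{T}$. Because $H$ converges to zero and has nonnegative entries, the Neumann series $\sum_{n \geq 0} H^{n}$ converges to $(I - H)^{-1}$, and this inverse again has nonnegative entries; multiplying through by it therefore preserves the inequality and gives $(x, y)^{T} \leq (I - H)^{-1}(\varepsilon_{1}, \varepsilon_{2})^{T}$. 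Reading off the $2 \times 2$ inverse explicitly, with $\Delta = (1 - k_{1})(1 - k_{4}) - k_{2} k_{3} > 0$, produces the stability constants
$$C_{1} = \frac{1 - k_{4}}{\Delta}, \quad C_{2} = \frac{k_{2}}{\Delta}, \quad C_{3} = \frac{k_{3}}{\Delta}, \quad C_{4} = \frac{1 - k_{1}}{\Delta},$$
all strictly positive, which is precisely the desired conclusion.

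The main obstacle is the step where the vector inequality is multiplied by $(I - H)^{-1}$: this is legitimate only because $(I - H)^{-1}$ has nonnegative entries, and that in turn hinges on interpreting ``$H$ converges to zero'' as $H^{n} \to 0$, which forces the spectral radius below one, guarantees $1 - k_{1} > 0$, $1 - k_{4} > 0$ and $\Delta > 0$ (via the dominance of the spectral radius over principal submatrices and the factorization $\Delta = \prod_{i}(1 - \lambda_{i})$), and validates the Neumann representation of $(I - H)^{-1}$. Establishing these equivalences, equivalently invoking Perov's theorem in the same breath, is the only nonroutine part; the remaining estimates are direct applications of the triangle inequality and the Lipschitz hypotheses.
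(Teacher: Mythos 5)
The paper does not prove this theorem; it is quoted verbatim from Urs \cite{Urs}, so there is no internal proof to compare against. Your argument is correct and is essentially the standard proof from Urs's paper: Perov's fixed point theorem (applicable precisely because a matrix convergent to zero has spectral radius less than one) furnishes the exact solution, the triangle-inequality estimates give $(I-H)(x,y)^{T} \leq (\varepsilon_{1},\varepsilon_{2})^{T}$ componentwise, and the nonnegativity of $(I-H)^{-1} = \sum_{n \geq 0} H^{n}$ legitimizes inverting that inequality, with your justifications of $1 - k_{1} > 0$, $1 - k_{4} > 0$ and $\Delta = \det(I-H) = (1-\lambda_{1})(1-\lambda_{2}) > 0$ all sound. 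The only cosmetic point is that $C_{2} = k_{2}/\Delta$ or $C_{3} = k_{3}/\Delta$ may vanish when $k_{2} = 0$ or $k_{3} = 0$, while the definition demands strictly positive constants; since enlarging the constants only weakens the inequalities, replacing any zero entry by an arbitrary positive number repairs this trivially.
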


Set
\begin{equation}
H =
\begin{pmatrix}
\lambda_{1} L_{1} & \lambda_{1} L_{2} \\
\lambda_{2} L_{3} & \lambda_{2} L_{4}
\end{pmatrix}.
\end{equation}

\begin{theorem} \label{Stab}
Assume (H1), (H2), (H3), and (H4) hold. Choose $$R \geq \frac{(\lambda_{1}  + \lambda_{2})M}{1 - [\lambda_{1} (L_{1} + L_{2}) + \lambda_{2} (L_{3} + L_{4})]}.$$ Further, assume the spectral radius of $H$ is less than one. Then, the solution of system \eqref{Coupled System} is Ulam--Hyers stable.
\end{theorem}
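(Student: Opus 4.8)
The plan is to view Theorem~\ref{Stab} as a direct application of Urs's abstract Ulam--Hyers stability criterion (the second cited theorem of \cite{Urs}) to the operators $T_1, T_2$ defined in \eqref{A 1} and \eqref{A 2}. Since a pair $(u_1,u_2)$ solves \eqref{Coupled System} exactly when it is a fixed point of $T$, i.e. when it solves the operational system \eqref{System 1} with these $T_1, T_2$, the Ulam--Hyers stability of \eqref{Coupled System} will follow once the hypotheses of that criterion are verified. Thus the whole proof reduces to (i) establishing the coupled Lipschitz estimates \eqref{System 2} with the correct constants, and (ii) checking that the associated matrix $H$ converges to zero.

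For step (i), I would reuse the computation already carried out in the proof of Theorem~\ref{Ban 2}. There the inequalities \eqref{A 1 M} and \eqref{A 2 M} were derived using only the global Lipschitz hypothesis (H2) together with property~(5) of Theorem~\ref{Properties}, namely $\sum_{s=a+1}^{b} G_i(t,s) \le \lambda_i$. Because (H2) is a \emph{global} Lipschitz condition on $\mathbb{R}^2$, these estimates in fact hold for every pair $(u_1,u_2),(v_1,v_2)\in X\times X$, not merely on $\mathcal{B}_R$. Rewriting them gives exactly
$$\|T_1(u_1,u_2)-T_1(v_1,v_2)\|_X \le \lambda_1 L_1\|u_1-v_1\|_X + \lambda_1 L_2\|u_2-v_2\|_X,$$
$$\|T_2(u_1,u_2)-T_2(v_1,v_2)\|_X \le \lambda_2 L_3\|u_1-v_1\|_X + \lambda_2 L_4\|u_2-v_2\|_X,$$
so \eqref{System 2} holds with $k_1=\lambda_1 L_1$, $k_2=\lambda_1 L_2$, $k_3=\lambda_2 L_3$, $k_4=\lambda_2 L_4$; these are precisely the entries of the matrix $H$ fixed just before the statement.

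For step (ii), I would invoke the standard fact from matrix analysis that a square matrix $H$ satisfies $H^n\to 0$ as $n\to\infty$ (``$H$ converges to zero'') if and only if its spectral radius is strictly less than one. The hypothesis of Theorem~\ref{Stab} supplies exactly this spectral-radius condition, whence $H$ converges to zero. Applying Urs's criterion to the verified system \eqref{System 2} then yields constants $C_1,\dots,C_4>0$ furnishing the required stability estimates, while the existence of a genuine solution to compare against is guaranteed by Theorem~\ref{Ban 2} (this is the role of the lower bound imposed on $R$). Hence the solution of \eqref{Coupled System} is Ulam--Hyers stable.

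The step most deserving of care is (ii): one must correctly interpret the phrase ``converges to zero'' appearing in the cited theorem and match it with the spectral-radius hypothesis of Theorem~\ref{Stab}. Everything else is bookkeeping that transcribes the already-proved contraction estimates into the format demanded by Urs's theorem, so I do not anticipate any genuine difficulty beyond this identification.
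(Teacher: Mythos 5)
Your proposal is correct and follows essentially the same route as the paper: the paper's proof likewise reuses the Lipschitz estimates \eqref{A 1 M}--\eqref{A 2 M} from Theorem~\ref{Ban 2} to verify \eqref{System 2} with $k_1 = \lambda_1 L_1$, $k_2 = \lambda_1 L_2$, $k_3 = \lambda_2 L_3$, $k_4 = \lambda_2 L_4$, and then invokes Urs's criterion under the spectral-radius hypothesis. If anything, you are more careful than the paper, which leaves implicit both the equivalence between ``$H$ converges to zero'' and $\rho(H) < 1$ and the fact that (H2) being a global Lipschitz condition makes the estimates valid on all of $X \times X$ rather than just $\mathcal{B}_R$.
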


\begin{proof}
In view of Theorem \ref{Ban 2}, we have
\begin{equation}
\begin{cases}
\big{\|}T_{1}(u_{1}, u_{2}) - T_{1}(v_{1}, v_{2})\big{\|}_{X} \leq \lambda_{1} \big{[}L_{1}\|u_{1} - v_{1}\|_{X} + L_{2}\|u_{2} - v_{2}\|_{X}\big{]}, \\
\big{\|}T_{2}(u_{1}, u_{2}) - T_{2}(v_{1}, v_{2})\big{\|}_{X} \leq \lambda_{2} \big{[}L_{3}\|u_{1} - v_{1}\|_{X} + L_{4}\|u_{2} - v_{2}\|_{X}\big{]},
\end{cases}
\end{equation}
which implies that
\begin{equation}
\|T(u_{1}, u_{2}) - T(v_{1}, v_{2})\|_{X \times X} \leq H \begin{pmatrix}
\|u_{1} - v_{1}\|_{X} \\
\|v_{2} - v_{2}\|_{X}.
\end{pmatrix}
\end{equation}
Since the spectral radius of $H$ is less than one, the solution of \eqref{Coupled System} is Ulam--Hyers stable. The proof is complete.
\end{proof}

\section{Example}

Consider the following coupled system of two-point nabla fractional difference boundary value problems
\begin{equation} \label{Coupled System E}
\begin{cases}
\big{(}\nabla^{1.5}_{\rho(0)}u_{1}\big{)}(t) + (0.01)e^{-t}\left[1 + \tan^{-1}u_{1}(t) + \tan^{-1}u_{2}(t)\right] = 0, \quad t \in \mathbb{N}^{9}_{2}, \\ \big{(}\nabla^{1.5}_{\rho(0)}u_{2}\big{)}(t) + (0.02)\left[e^{-t} + \sin u_{1}(t) + \sin u_{2}(t)\right] = 0, \quad t \in \mathbb{N}^{9}_{2}, \\ u_{1}(0) = 0, ~ u_{1}(9) = 0, \\
u_{2}(0) = 0, ~ u_{2}(9) = 0.
\end{cases}
\end{equation}

Comparing \eqref{Coupled System} and \eqref{Coupled System E}, we have $a = 0$, $b = 9$, $\alpha_{1} = \alpha_{2} = 1.5$, $$f_{1}(t, u_{1}, u_{2}) = (0.01)e^{-t}\left[1 + \tan^{-1}u_{1} + \tan^{-1}u_{2}\right],$$ and $$f_{2}(t, u_{1}, u_{2}) = (0.02)\left[e^{-t} + \sin u_{1} + \sin u_{2}\right],$$ for all $(t, u_{1}, u_{2}) \in \mathbb{N}^{9}_{0} \times \mathbb{R}^2$. Clearly, $f_{1}$ and $f_{2}$ are continuous on $\mathbb{N}^{9}_{0} \times \mathbb{R}^2$. Next, $f_{1}$ and $f_{2}$ satisfy assumption (H2) with $L_{1} = 0.01$, $L_{2} = 0.01$, $L_{3} = 0.02$ and $L_{4} = 0.02$. We have, $$M_{1} = \max_{t \in \mathbb{N}^{9}_{1}}|f_{1}(t, 0, 0)| = \frac{0.01}{e},$$ $$M_{2} = \max_{t \in \mathbb{N}^{9}_{1}}|f_{2}(t, 0, 0)| = \frac{0.02}{e},$$ $$\lambda_{1} = \left(\frac{b - a - 1}{\alpha_{1} \Gamma(\alpha_{1} + 1)}\right)\left(\frac{(\alpha_{1} - 1)(b - a) + 1}{\alpha_{1}}\right)^{\overline{\alpha_{1} - 1}} \approx 7.4259,$$ and $$\lambda_{2} = \left(\frac{b - a - 1}{\alpha_{2} \Gamma(\alpha_{2} + 1)}\right)\left(\frac{(\alpha_{2} - 1)(b - a) + 1}{\alpha_{2}}\right)^{\overline{\alpha_{2} - 1}} \approx 7.4259.$$ Also, $$L = \lambda_{1} (L_{1} + L_{2}) + \lambda_{2} (L_{3} + L_{4}) \approx 0.4456 < 1,$$ implying that (H4) holds. Thus, by Theorem \ref{Ban 2}, the system \eqref{Coupled System E} has a unique solution $(u_{1}, u_{2}) \in \mathcal{B}_{R}$, where $$R \geq \frac{(\lambda_{1} M_{1} + \lambda_{2} M_{2})}{1 - [\lambda_{1} (L_{1} + L_{2}) + \lambda_{2} (L_{3} + L_{4})]} = 0.1479.$$ Further, $$H =
\begin{pmatrix}
\lambda_{1} L_{1} & \lambda_{1} L_{2} \\
\lambda_{2} L_{3} & \lambda_{2} L_{4}
\end{pmatrix} = \begin{pmatrix}
0.0743 & 0.0743 \\
0.1486 & 0.1486
\end{pmatrix}.$$ The spectral radius of $H$ is 0.0223 which is less than one. Hence, by Theorem \ref{Stab} the solution of \eqref{Coupled System} is Ulam--Hyers stable.

\end{document}